\newtheorem{theorem}{Theorem}
\newtheorem{lemma}{Lemma}
\newtheorem{corollary}{Corollary}
\theoremstyle{definition}
\newtheorem{remark}{Remark}
\newtheorem{definition}{Definition}
\DeclareMathOperator{\rank}{rank}
\DeclareMathOperator{\Jac}{Jac}
\DeclareMathOperator{\Ab}{Ab}
\DeclareMathOperator{\Hom}{Hom}
\DeclareMathOperator{\HH}{H}
\DeclareMathOperator{\K}{K}
\DeclareMathOperator{\TT}{T}
\begin{document}

\title[Abel-Jacobi map under Schiffer variation]{Abel-Jacobi map under Schiffer variation}
\author[Taejung Kim]{Taejung Kim}

\address{Korea Institute for Advanced Study\\
207-43 Cheongyangri-dong\\
Seoul 130-722, Korea}

\thanks{The author would like to thank Jun-Muk Hwang for asking a question of Corollary~\ref{co1} and Scott A. Wolpert for helpful comments about the previous version of this paper and to express his sincere gratefulness to the Korea Institute for Advanced Study for providing him a hospital environment while preparing this manuscript. }

\keywords{Abel-Jacobi map, Elliptic soliton, Hyperelliptic curve, Infinitesimal Hodge structure, Schiffer variation.}

\subjclass[2010]{14D07, 14H42, 14H70,  32G15, 32G20, 35Q51}
\date{\today}
\email{tjkim@kias.re.kr}

\begin{abstract}
We characterize how to vary the Abel-Jacobi map  in terms of Schiffer variation.
From this characterization, we will interpret the relation of hyperellipticity of curves with Schiffer variation and describe the deformation of elliptic solitons under Schiffer variation.
\end{abstract}
\maketitle

\section{Introduction}

The main motivation of this investigation is to see what kind of properties of a Riemann surface $\Gamma$ are changed or preserved under Schiffer variation. The main tool of this paper is a characterization of the variation of the Abel-Jacobi map under Schiffer variation.
From this description, we will see that certain properties of a Riemann surface $\Gamma$ related to the Abel-Jacobi map have natural connections to Schiffer variation. The main ingredient of this description is calculation of an exact variational formula for a basis of holomorphic differentials out of a given variational formula of a period matrix under Schiffer variation. Once we illustrate this, it is an easy matter to determine the variation of the Abel-Jacobi map. The properties associated with the derivatives of the Abel-Jacobi map will be described. For example, the rationality of the first derivative of the Abel-Jacobi map is related to, so-called, elliptic solitons. The vanishing of the second derivative of the Abel-Jacobi map implies the hyperellipticity of a Riemann surface $\Gamma$. Hence, the explicit variational formulae of derivatives of the Abel-Jacobi map under Schiffer variation will show which properties are preserved or not.

In Section~\ref{defle}, we collect basic definitions and known results for the readers' convenience. In Section~\ref{comva}, we reconstruct a basis of holomorphic differentials from a given period matrix under Schiffer variation. In Section~\ref{hysch}, we describe the relation between hyperellipticity and Schiffer variation. As a corollary of this description, we will show that the space $\mathscr{C}_g$ of rank 1 transformations in an infinitesimal variation of Hodge structure associated with a Riemann surface $\Gamma$ has an empty intersection with a tangent space of hyperelliptic locus. In Section~\ref{ellsch}, we will also exhibit an empty intersection between the tangent space of moduli space of elliptic solitons and $\mathscr{C}_g$.

\section{Preliminaries}\label{defle}

\subsection{Hyperellipticity of a Riemann surface}
For more details, see \cite{fa74}:

\begin{definition}
Let $\Gamma$ be a compact Riemann surface of genus $g$. We say that a basis $\{a_1,\dots,a_g,b_1,\dots,b_g\}$ of generators of $\HH_{1}(\Gamma,\mathbb{Z})\cong\mathbb{Z}^{2g}$ is canonical if
$$a_i\circ a_j=b_i\circ b_j=0\text{ and }a_i\circ b_j=\delta_{ij}.$$
\end{definition}

\begin{lemma}\label{lem1}(p.135 in \cite{fa74})
Let $\eta$ be a holomorphic differential on a Riemann surface with  a local expression $\eta=f(\tau)d\tau$ around $p$ and $\omega^{(n)}_{p}$ be a normalized abelian differential of second kind with a local expression
$$\omega^{(n)}_{p}=\frac{d\tau}{\tau^{n+1}}.$$
Then we have
$$\frac{\partial^{n-1}}{\partial\tau^{n-1}}f(0)=\frac{\partial^n}{\partial\tau^n}\int_{p_0}^{p}\eta=\int_{\gamma}\omega^{(n)}_{p}\text{ where}$$
$\tau(p)=0$ and $\gamma$ is the dual cycle associated with $\eta$.
\end{lemma}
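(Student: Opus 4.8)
The plan is to treat the two asserted equalities separately. The first, $\frac{\partial^{n-1}}{\partial\tau^{n-1}}f(0)=\frac{\partial^{n}}{\partial\tau^{n}}\int_{p_{0}}^{p}\eta$, is purely local and follows from the fundamental theorem of calculus. Writing the Abel-Jacobi integral in the local coordinate as $G(\tau)=\int_{p_{0}}^{\tau}f(s)\,ds$, one has $G'(\tau)=f(\tau)$, hence $G^{(n)}(\tau)=f^{(n-1)}(\tau)$, and evaluating at $\tau(p)=0$ gives the claim. This reduces the lemma to identifying $\int_{\gamma}\omega^{(n)}_{p}$ with the Taylor coefficient $f^{(n-1)}(0)$.

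For the remaining equality I would invoke the Riemann bilinear reciprocity law pairing the differential of the first kind $\eta$ against the differential of the second kind $\omega^{(n)}_{p}$. Concretely, I would fix the canonical homology basis $\{a_{1},\dots,a_{g},b_{1},\dots,b_{g}\}$, cut $\Gamma$ along it to obtain a simply connected $4g$-gon $\Delta$ with boundary word $\prod_{i}a_{i}b_{i}a_{i}^{-1}b_{i}^{-1}$, and arrange the pole $p$ to lie in the interior of $\Delta$. On $\Delta$ the abelian integral $F(z)=\int_{p_{0}}^{z}\eta$ is single-valued, so I can form the meromorphic $1$-form $F\,\omega^{(n)}_{p}$ and evaluate
$$\frac{1}{2\pi i}\oint_{\partial\Delta}F\,\omega^{(n)}_{p}$$
in two ways.

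Evaluating by the residue theorem, the only interior contribution is the pole at $p$; expanding $F(\tau)=\mathrm{const}+\sum_{k\ge 0}\frac{f^{(k)}(0)}{(k+1)!}\tau^{k+1}$ against $\omega^{(n)}_{p}=\tau^{-(n+1)}d\tau$ shows that $\res_{p}\big(F\,\omega^{(n)}_{p}\big)=\frac{f^{(n-1)}(0)}{n!}$. Evaluating instead by pairing congruent edges of $\partial\Delta$, the boundary integral collapses to the familiar bilinear combination $\sum_{i}\big(\oint_{a_{i}}\eta\,\oint_{b_{i}}\omega^{(n)}_{p}-\oint_{b_{i}}\eta\,\oint_{a_{i}}\omega^{(n)}_{p}\big)$. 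Here the normalization hypotheses do the work: since $\omega^{(n)}_{p}$ is a \emph{normalized} differential of the second kind its $a$-periods vanish, and if $\eta$ is the normalized holomorphic differential with $\oint_{a_{i}}\eta=\delta_{ij}$, whose dual cycle is $\gamma=b_{j}$, the sum reduces to the single period $\oint_{\gamma}\omega^{(n)}_{p}$. Matching the two evaluations yields the desired identity.

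I expect the only genuine obstacle to be the constant bookkeeping: the factor $2\pi i/n!$ produced by the residue computation must be reconciled with the normalization conventions for $\omega^{(n)}_{p}$ and for the period pairing adopted in \cite{fa74}, under which the lemma is stated with that constant absorbed into the definitions. The remaining technical points---ensuring $p$ lies in the interior of $\Delta$, justifying single-valuedness of $F$ on the cut surface, and the standard edge-pairing identity for the $4g$-gon---are routine once the fundamental polygon is set up.
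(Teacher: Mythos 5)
The paper contains no proof of this lemma at all: it is quoted with a citation to p.~135 of \cite{fa74}, so the only comparison available is with the classical argument behind that citation. Your proof is exactly that argument. The first equality is the fundamental theorem of calculus, as you say; the second is the Riemann bilinear reciprocity on the cut $4g$-gon, where the residue theorem gives $\mathrm{Res}_p\bigl(F\,\omega^{(n)}_p\bigr)=f^{(n-1)}(0)/n!$ and the edge-pairing identity gives $\sum_i\bigl(\oint_{a_i}\eta\oint_{b_i}\omega^{(n)}_p-\oint_{b_i}\eta\oint_{a_i}\omega^{(n)}_p\bigr)$, which collapses to $\oint_\gamma\omega^{(n)}_p$ under the normalization hypotheses (for a general holomorphic $\eta=\sum_j c_j\omega_j$ one extends by linearity, taking $\gamma=\sum_j c_j b_j$ as the dual cycle). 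Your computation also exposes a real imprecision in the statement: with the paper's literal normalization $\omega^{(n)}_p=d\tau/\tau^{n+1}$ and ordinary period integrals, the correct identity is $\int_\gamma\omega^{(n)}_p=\frac{2\pi i}{n!}\,f^{(n-1)}(0)$, so the lemma as printed drops the factor $2\pi i/n!$. You are right to flag this as a convention issue (the constant must be absorbed into the definition of the normalized differential or of the period pairing in \cite{fa74}) rather than a flaw in the argument; the discrepancy is harmless for the paper's later uses of the lemma, e.g.\ in Lemma~\ref{lehy1} and the criteria of Sections~\ref{hysch} and~\ref{ellsch}, where only the vanishing or non-vanishing of these quantities matters.
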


\begin{definition}
A Riemann surface $\Gamma$ is hyperelliptic at $p$ if there exists a local coordinate $\tau$ of $\Gamma$ at $p$ such that $\tau^2$ defines a global projection $\Gamma\to\mathbb{P}^1$ of degree 2 ramified at $p$.
\end{definition}

\begin{lemma}\label{lehy1}
A Riemann surface $\Gamma$ is hyperelliptic at $p$ if and only if
$$\frac{\partial^2}{\partial\tau^2}\Ab(\Gamma)|_{\tau=0}=0\text{ where}$$
$\Ab$ is the Abel-Jacobi map $\Ab:\Gamma\to\Jac(\Gamma)$.
\end{lemma}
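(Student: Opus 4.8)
The plan is to convert the vanishing of the second derivative of $\Ab$ into a statement about the periods of a single differential of the second kind, and then to recognize that condition as the existence of a degree-two function on $\Gamma$ ramified at $p$. Fix a canonical basis $\{a_1,\dots,a_g,b_1,\dots,b_g\}$ of $\HH_1(\Gamma,\mathbb{Z})$ and let $\eta_1,\dots,\eta_g$ be the associated normalized basis of holomorphic differentials, so that $\int_{a_i}\eta_j=\delta_{ij}$ and
$$\Ab(p)=\left(\int_{p_0}^p\eta_1,\dots,\int_{p_0}^p\eta_g\right).$$
Writing $\eta_j=f_j(\tau)\,d\tau$ in the local coordinate $\tau$ at $p$, I would differentiate under the integral to get $\frac{\partial^2}{\partial\tau^2}\Ab=(f_1'(\tau),\dots,f_g'(\tau))$, so that the hypothesis $\frac{\partial^2}{\partial\tau^2}\Ab|_{\tau=0}=0$ is exactly the vanishing $f_j'(0)=0$ for every $j$.

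Next I would apply Lemma~\ref{lem1} with $n=2$: for each $j$ the coefficient $f_j'(0)$ equals $\int_{\gamma_j}\omega^{(2)}_p$, where $\gamma_j$ is the cycle dual to $\eta_j$ and $\omega^{(2)}_p$ is the normalized differential of the second kind with principal part $d\tau/\tau^3$ at $p$. Because the $\eta_j$ form the normalized basis, the dual cycles $\gamma_j$ are (nonzero multiples of) the $b_j$, and together with the $a_i$ they generate $\HH_1(\Gamma,\mathbb{Z})$. Since $\omega^{(2)}_p$ is normalized its $a$-periods already vanish; hence the simultaneous vanishing of all $f_j'(0)$ is equivalent to the vanishing of every $b$-period as well, i.e.\ to the statement that $\omega^{(2)}_p$ has all periods equal to zero.

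The heart of the argument is to identify this vanishing with hyperellipticity at $p$. As $\omega^{(2)}_p$ is of the second kind it has no residues, so the only obstruction to integrating it to a single-valued meromorphic function is its collection of periods. When all periods vanish I would set $h=\int\omega^{(2)}_p$, a globally defined meromorphic function holomorphic off $p$; since $\int d\tau/\tau^3=-\tfrac12\tau^{-2}+\cdots$, it has a pole of order exactly two at $p$. Thus $h$ realizes a degree-two projection $\Gamma\to\mathbb{P}^1$ ramified at $p$, and after rescaling $\tau$ this is precisely the condition that $\Gamma$ be hyperelliptic at $p$. Conversely, given that $\Gamma$ is hyperelliptic at $p$, I would take the degree-two function $h$ with a double pole at $p$ furnished by the definition (replacing the projection by its reciprocal if needed); then $dh$ is a second-kind differential whose principal part is a scalar multiple of $d\tau/\tau^3$ and which is exact, hence has vanishing $a$-periods. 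By the uniqueness of the normalized second-kind differential with prescribed principal part, $dh$ is proportional to $\omega^{(2)}_p$, so exactness forces all periods of $\omega^{(2)}_p$ to vanish, and running the first two steps backwards yields $\frac{\partial^2}{\partial\tau^2}\Ab|_{\tau=0}=0$.

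I expect the main obstacle to lie in the hyperelliptic direction: one must confirm that the local coordinate can be normalized so that $h$ has principal part exactly $\tau^{-2}$ with no simple-pole term, so that $dh$ genuinely matches the normalization of $\omega^{(2)}_p$, and that the uniqueness of the normalized second-kind differential is invoked correctly to pin $dh$ to a multiple of $\omega^{(2)}_p$. The remaining steps—differentiating the Abel-Jacobi integral and reading off the period interpretation from Lemma~\ref{lem1}—are routine.
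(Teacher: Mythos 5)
Your proposal is correct, and it in fact does more than the paper's own proof. The paper's proof is the single chain of equalities
$$\frac{\partial^2}{\partial\tau^2}\Ab(\Gamma)_j|_{\tau=0}\stackrel{\text{Lemma}~\ref{lem1}}{=}\int_{b_j}\omega_{p}^{(2)}=\int_{b_j}d\Big(\frac{1}{\tau^2}\Big)=0,$$
which establishes only the implication (hyperelliptic at $p$) $\Rightarrow$ (vanishing second derivative): the middle identification $\omega_{p}^{(2)}=d(1/\tau^2)$ is precisely where hyperellipticity is used, since it is the globality of the function $1/\tau^2$ that makes $d(1/\tau^2)$ an exact second-kind differential with the prescribed principal part, hence (by uniqueness of the normalized differential, and up to the factor $-2$ that the paper silently drops) equal to $\omega_{p}^{(2)}$, after which Stokes' theorem kills every $b$-period. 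Your fourth paragraph reproduces exactly this argument, and more carefully: you make the uniqueness step and the constant normalization explicit rather than implicit. Your third paragraph --- integrating $\omega_{p}^{(2)}$, once all of its periods vanish, to a single-valued meromorphic function $h$ with a double pole only at $p$, and then extracting from $1/h$ a local coordinate whose square is a global degree-two map ramified at $p$ --- is the converse implication, which the lemma asserts (it is stated as an ``if and only if'') but which the paper's proof never addresses. So where your argument overlaps with the paper's it takes the same route through Lemma~\ref{lem1} and the periods of $\omega_{p}^{(2)}$, and in addition it supplies the missing direction; the one point you flag as delicate (normalizing the principal part of $h$) is indeed the only point requiring care, and it is handled by the standard square-root-of-a-double-zero coordinate change.
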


\begin{proof}
Let $\{\omega_1,\dots,\omega_g\}$ be a basis of normalized holomorphic differentials and $\{b_1,\dots,b_g\}$ be a basis of dual cycles associated with the normalized holomorphic differentials: For $j=1,\dots,g$, we have
$$\begin{aligned}
\frac{\partial^2}{\partial\tau^2}\Ab(\Gamma)_j|_{\tau=0}
&=\frac{\partial^2}{\partial\tau^2}\int_{p_0}^{p}\omega_j\stackrel{\text{Lemma}~\ref{lem1}}{=}\int_{b_j}\omega_{p}^{(2)}\\
&=\int_{b_j}d(\frac{1}{\tau^2})\stackrel{Stokes Theorem}{=}0.
\end{aligned}$$
\end{proof}

\subsection{Schiffer's interior  variation}

Let $\Gamma$ be a compact Riemann surface of genus $g\geq1$. For a given point $p\in\Gamma$, we fix a coordinate neighborhood $(U_p,\tau)$ with $\tau(p)=0$. Let
$$D_p=\tau^{-1}(\{\tau\in\mathbb{C}\mid|\tau|<1\})\text{ and }C_p=\tau^{-1}(\{\tau\in\mathbb{C}\mid|\tau|=1\}).$$
Let $\tau_\epsilon=\tau+\frac{\epsilon}{\tau}$. When $|\epsilon|$ is small enough, $C_\epsilon=\tau_\epsilon(C_p)$ is a simple closed analytic curve, i.e., a Jordan curve in the $\tau_\epsilon$-plane. The Jordan domain with boundary $C_\epsilon$ is denoted by $D_\epsilon$. A new Riemann surface $\Gamma_\epsilon$ with a new complex structure after Schiffer variation of point $p\in \Gamma$ is defined by removing $D_p$ from $\Gamma$ and attaching $D_\epsilon\cup C_\epsilon$ with boundary identification by $\tau_\epsilon$. More precisely, letting $U_1$ be an annular neighborhood of containing $C_p$ for which $C_p\subset U_1\subset U$, we glue $(\Gamma-D_p)\cup U_1$ and $D_\epsilon\cup \tau_\epsilon(U_1)$ by identifying $\tau_\epsilon(U_1)$ and $U_1$ under the mapping $\tau_\epsilon$. Note that a system of charts for $\Gamma_\epsilon$ is given by the original charts from $\Gamma$ on the interiors of $\Gamma-D_p$ and $\tau_\epsilon$ on the interior of $D_\epsilon$ in addition to the following extra coordinate $(h,U_1)$:
$$h(p)=\begin{cases}\tau\text{ on outside of $C$}\\\tau_\epsilon\text{ on inside of $C$}\end{cases}$$
Note that we have an explicit quasi-conformal map $\tau(D_p)$ to $D_\epsilon$,
$$\phi_\epsilon(\tau)=\tau+\epsilon\overline{\tau}=\tau_\epsilon.$$
For more details about Schiffer variation, one can consult \cite{gar75, na85, pa63, sh54}.

\subsection{Infinitesimal Hodge structure}

See \cite{gr84} for details:

\begin{definition}
An infinitesimal variation of Hodge structure of weight n is the set of the following data
$\{\HH_\mathbb{Z},\HH^{p,q},Q,\TT,\delta\}$ such that

\begin{itemize}
\item[(i)]
a complex vector space
$$\HH=\bigoplus_{p+q=n}\HH^{p,q}\text{ and }\HH^{p,q}=\overline{\HH^{q,p}}\text{ where }$$
$\HH_\mathbb{Z}$ is a free $\mathbb{Z}$-module, $-$ is the complex conjugation, and $\HH=\HH_\mathbb{Z}\otimes\mathbb{C}$ with complex linear subspaces $\HH^{p,q}$ for integers $p,q\geq0$
\item[(ii)]
a bilinear form $Q:\HH_\mathbb{Z}\times\HH_\mathbb{Z}\to\mathbb{Z}$
$$\begin{aligned}
Q(\phi,\psi)&=(-1)^nQ(\psi,\phi)\\
Q(\psi,\phi)&=0\text{ for }\psi\in\HH^{p,q},\phi\in\HH^{p',q'},p\ne p'\\
(\sqrt{-1})^{p-q}Q(\psi,\overline{\psi})&>0\text{ for any nonzero }\psi\in\HH^{p,q}
\end{aligned}$$
\item[(iii)]
a finite dimensional complex vector space $\TT$ and a linear map $\delta$
$$\delta=\bigoplus_{p=1}^{n}\delta_p:\TT\to\bigoplus_{p=1}^{n}\Hom(\HH^{p,q},\HH^{p-1,q+1})$$
satisfying
$$\begin{aligned}
\delta_{p-1}(\xi_1)\delta_p(\xi_2)&=\delta_{p-1}(\xi_2)\delta_p(\xi_1)\text{ for }\xi_1,\xi_2\in\TT\\
Q(\delta(\xi)\psi,\phi)+Q(\psi,\delta(\xi)\phi)&=0\text{ for }\psi\in\HH^{p,q},\phi\in\HH^{q+1,p-1},\text{ and }\xi\in\TT.
\end{aligned}$$
\end{itemize}
\end{definition}

A typical example of an infinitesimal variation of Hodge structure of weight $1$, which will be used in this paper is as follows: A finite dimensional complex vector space $\TT$ is $\HH^1(\Gamma,\Theta)$ where $\Gamma$ is an algebraic curve and $\Theta$ is the sheaf of holomorphic vector fields on $\Gamma$. Note that according to the Kodaira-Spencer deformation theory, $\HH^1(\Gamma,\Theta)$ can be regarded as a tangent space $\TT_{[\Gamma]}\mathscr{M}_g$ of $\mathscr{M}_g$ at $[\Gamma]$ where $\mathscr{M}_g$ is the moduli space of curves of genus $g$. On the other hands, a linear map $\delta$ is the differential 
$$\delta:=\phi_\ast:\HH^1(\Gamma,\Theta)\to\Hom(\HH^{1,0},\HH^{0,1})$$
of the period map $\phi:\mathscr{M}_g\to\mathbf{Sp}(g,\mathbb{Z})\backslash\mathbb{H}_g$ where $\mathbb{H}_g$ is the Siegel upper half space. Let $\mathscr{C}_g$ be the space of rank 1 transformations in $\Hom(\HH^{1,0}(\Gamma),\HH^{0,1}(\Gamma))$. Since $\dim_\mathbb{C}\mathscr{M}_g=3g-3$ and $\dim_\mathbb{C}\Hom(\HH^{1,0},\HH^{0,1})=g^2$, 
it is not obvious whether $\delta(\TT_{[\Gamma]}\mathscr{M}_g)$ and $\mathscr{C}_g$ have a nonempty intersection or not. In fact, we have the following theorem.

\begin{theorem}\label{th1}(p.271 in \cite{gr83} and p.56 in \cite{gr84}) For a general algebraic curve $\Gamma$ of genus $\geq5$, the rank one degeneracy locus $\Sigma$ is the bi-canonical image, i.e.,
$$\Sigma:=\{\xi\in\mathbb{P}\HH^1(\Gamma,\Theta)\mid\rank\delta(\xi)\leq1\}=\phi_{2\K_\Gamma}(\Gamma)\text{ where}$$
$\K_\Gamma$ is a canonical bundle of $\Gamma$ and $\phi_{2\K_\Gamma}:\Gamma\to\mathbb{P}^{3g-2}$ is a bi-canonical mapping associated with a linear system $|2\K_\Gamma|$. In general,
$$\phi_{2\K}(\Gamma)\subseteq\Sigma.$$
\end{theorem}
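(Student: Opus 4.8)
The plan is to recast $\delta$ as a cup product and then dualize, reducing the rank condition to a statement about quadrics through the canonical curve. First I would use the standard identifications $\HH^{1,0}=\HH^0(\Gamma,\K_\Gamma)$, $\HH^{0,1}=\HH^1(\Gamma,\mathcal{O}_\Gamma)$, and $\TT=\HH^1(\Gamma,\Theta)$ with $\Theta=\K_\Gamma^{-1}$, under which $\delta(\xi)$ is cup product with $\xi$, namely $\delta(\xi)\colon\HH^0(\Gamma,\K_\Gamma)\to\HH^1(\Gamma,\mathcal{O}_\Gamma)$, $\omega\mapsto\xi\cup\omega$. Pairing by Serre duality $\HH^1(\Gamma,\mathcal{O}_\Gamma)\times\HH^0(\Gamma,\K_\Gamma)\to\mathbb{C}$, I would attach to $\xi$ the bilinear form $B_\xi(\omega,\omega')=\langle\xi\cup\omega,\omega'\rangle$ on $\HH^0(\Gamma,\K_\Gamma)$. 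The second compatibility in part (iii) of the definition of an infinitesimal variation of Hodge structure forces $B_\xi$ to be symmetric, and the adjunction between cup product and multiplication gives $B_\xi(\omega,\omega')=\langle\xi,\omega\cdot\omega'\rangle$, where $\xi\in\HH^1(\Gamma,\Theta)\cong\HH^0(\Gamma,2\K_\Gamma)^{*}$ is read as a functional on $\HH^0(\Gamma,2\K_\Gamma)$ and $\omega\cdot\omega'$ is the product. Thus $\rank\delta(\xi)=\rank B_\xi$, and $\xi\in\Sigma$ precisely when $B_\xi$ is a rank one symmetric form.

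For the inclusion $\phi_{2\K_\Gamma}(\Gamma)\subseteq\Sigma$, which holds for every curve, I would compute $B_\xi$ on the Schiffer class. Representing the Kodaira-Spencer class $\xi_p$ of the Schiffer variation at $p$ by the cocycle supported near $p$ associated with the meromorphic vector field $\tfrac{1}{\tau}\tfrac{\partial}{\partial\tau}$, a residue computation gives $B_{\xi_p}(\omega,\omega')=\res_{p}\bigl(\tfrac{f(\tau)g(\tau)}{\tau}\,d\tau\bigr)=f(0)g(0)$ for local expressions $\omega=f\,d\tau$ and $\omega'=g\,d\tau$. Hence $B_{\xi_p}=\ell_p\otimes\ell_p$, where $\ell_p(\omega)=f(0)$ is evaluation at $p$ in the coordinate $\tau$; this is rank one (it is nonzero for $g\ge1$), so $[\xi_p]\in\Sigma$. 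Since $[\xi_p]=\phi_{2\K_\Gamma}(p)$ under $\HH^1(\Gamma,\Theta)\cong\HH^0(\Gamma,2\K_\Gamma)^{*}$, the containment follows.

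For the reverse inclusion for a general curve of genus $\ge5$, I would begin from $\xi\in\Sigma$, so that $B_\xi=\lambda\,\ell\otimes\ell$ for some $\ell\in\HH^0(\Gamma,\K_\Gamma)^{*}$ and scalar $\lambda$. The identity $\langle\xi,\omega\cdot\omega'\rangle=\lambda\,\ell(\omega)\ell(\omega')$ says that, as a functional on $\HH^0(\Gamma,2\K_\Gamma)$, $\xi$ agrees on every decomposable product with evaluation at the point $\ell\in\mathbb{P}\HH^0(\Gamma,\K_\Gamma)^{*}=\mathbb{P}^{g-1}$. By Noether's theorem, for a non-hyperelliptic curve the multiplication map $\mathrm{Sym}^2\HH^0(\Gamma,\K_\Gamma)\to\HH^0(\Gamma,2\K_\Gamma)$ is surjective, so $\xi$ is determined by $\ell$, and well-definedness on the kernel $I_2(\Gamma)$ of this map forces $q(\ell)=0$ for every quadric $q\in I_2(\Gamma)$; that is, $\ell$ lies on every quadric through the canonical curve $\phi_{\K_\Gamma}(\Gamma)\subset\mathbb{P}^{g-1}$. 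By the Enriques-Babbage-Petri theorem, for a general curve of genus $\ge5$ (non-hyperelliptic, non-trigonal, and not a smooth plane quintic) the canonical curve is cut out scheme-theoretically by its quadrics, whence $\ell=\ell_p$ for some $p\in\Gamma$, and then $\xi=\xi_p$, giving $\xi\in\phi_{2\K_\Gamma}(\Gamma)$.

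The substantive point, and the only place the genus hypothesis enters, is this final step: the passage from ``$\ell$ lies on every quadric of $I_2(\Gamma)$'' to ``$\ell$ lies on the canonical curve'' is precisely Petri's theorem, which fails for hyperelliptic, trigonal, and plane-quintic curves and holds for the generic curve exactly once $g\ge5$. By contrast, the symmetry of $B_\xi$ and the adjunction of cup product with multiplication are routine linear algebra and Dolbeault bookkeeping, while the residue evaluation of $\xi_p$ is the local heart of the easy inclusion and is where the second-kind differential calculations behind Lemma~\ref{lem1} reappear.
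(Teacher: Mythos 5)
The paper does not actually prove this statement: it is quoted directly from Griffiths (p.271 of \cite{gr83}, p.56 of \cite{gr84}), so there is no internal proof to compare against. Your argument is, in substance, the standard one from those sources, and it is correct: identify $\delta(\xi)$ with cup product, dualize via Serre duality so that $B_\xi$ becomes the pullback of $\xi\in\HH^0(\Gamma,2\K_\Gamma)^{*}$ under the multiplication map $\mathrm{Sym}^2\HH^0(\Gamma,\K_\Gamma)\to\HH^0(\Gamma,2\K_\Gamma)$, compute the Schiffer class $\xi_p$ by a residue to get $B_{\xi_p}=\ell_p\otimes\ell_p$ (which gives the unconditional inclusion $\phi_{2\K_\Gamma}(\Gamma)\subseteq\Sigma$), and close the general-curve case with Noether surjectivity plus Enriques--Babbage--Petri. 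One point deserves tightening: from $\rank B_\xi\le 1$ you write $B_\xi=\lambda\,\ell\otimes\ell$ and then evaluate quadrics at $\ell$, but the degenerate case $\lambda=0$ (i.e., $\delta(\xi)=0$) must be excluded explicitly; for a non-hyperelliptic curve, Noether surjectivity dualizes to injectivity of $\xi\mapsto B_\xi$ (infinitesimal Torelli), so $B_\xi=0$ forces $\xi=0$, which is not a point of $\mathbb{P}\HH^1(\Gamma,\Theta)$. The same injectivity is what licenses your final step $B_\xi=\lambda B_{\xi_p}\Rightarrow\xi=\lambda\xi_p$, so it is worth stating once and using twice. With that remark added, your proof is complete and correctly isolates where the hypothesis $g\ge 5$ and generality enter (Petri fails for hyperelliptic, trigonal, and plane-quintic curves, and for $g=3,4$ the quadrics through the canonical curve cut out more than the curve).
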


After Theorem~\ref{th1}, we may ask whether $\delta(\TT_{[\Gamma]}\mathscr{H}_g)$ and $\mathscr{C}_g$ have a nonempty intersection or not where $\mathscr{H}_g$ is the space of hyperelliptic curves and $\Gamma$ is a hyperelliptic curve. We will answer this question in Corollary~\ref{co1}. The following remark will be helpful for our discussion later.
\begin{remark}
Any non-zero vector $t_p\in\HH^1(\Gamma,\Theta)$ lying over $\phi_{2\K}(p)\in\mathbb{P}\HH^1(\Gamma,\Theta)$ turns out to be a Schiffer variation. So by Theorem~\ref{th1}, every rank 1 transformation is a Schiffer variation when $g\geq5$. See {\em p.274} in \cite{gr83} for details.
\end{remark}

\section{Holomorphic structure and variation formula}\label{comva}
Let $\{a_1,\dots,a_g,b_1,\dots,b_g\}$ be a canonical basis of a Riemann surface $\Gamma$ and $\{\omega_1,\dots,\omega_g\}$ be a basis of normalized holomorphic differentials with respect to the canonical basis of cycles. we extend this to complete dual basis 
$$\{\omega_1,\dots,\omega_g,\eta_1,\dots,\eta_g\}$$
of smooth $\mathbb{C}$-valued closed differential forms. That is,
$$\int_{a_j} \omega_i=\int_{b_j}\eta_i=\delta_{ij}.$$
We let a period matrix
$$\Pi=\Big(\pi_{ij}(\Gamma)\Big)_{g\times g}:=(\int_{b_i}\omega_j)_{g\times g}.$$
It is well-known that this matrix satisfies $\Im\Pi>0$ and $\Pi^t=\Pi$. When $\Gamma_{\epsilon}^{\ast}$ is a new Riemann surface induced by a Schiffer variation around $p_0$ from $\Gamma$, a variation formula in {\em p.234} of \cite{pa63} for periods is given by
\begin{equation}\label{va1}
\pi_{ij}(\Gamma_{\epsilon}^{\ast})=\pi_{ij}(\Gamma)+\epsilon f_i(p_0)f_j(p_0)+O(\epsilon^2).
\end{equation}
Here $f_i(z)$ is a local expression of $\omega_i$ around $p_0$, i.e., $\omega_i(z)=f_i(z)dz$. Variation formula~\eqref{va1} gives a variational formula for holomorphic differentials:
$$\omega_{j}^{\ast}=\omega_j+\sum_{k=1}^{g}(\epsilon f_j(p_0)f_k(p_0)+O(\epsilon^2))\eta_k.$$
The upshot is to describe the error term $O(\epsilon^2)$ completely to investigate the properties which we will formulate later. In general, we have

\begin{lemma}\label{vaf1}
$$\pi_{ij}(\Gamma_{\epsilon}^{\ast})=\pi_{ij}(\Gamma)+\sum_{n=1}^{\infty}\frac{\epsilon^n}{n!(n-1)!}\sum_{s+m=n-1}\begin{pmatrix}n-1\\m\end{pmatrix}(\frac{d^{n-1+m}}{dt^{n-1+m}}f^{\ast}_{i}(p_0))\frac{d^{s}}{dt^{s}}f_j(p_0).$$
\end{lemma}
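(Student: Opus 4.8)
The plan is to reduce the period variation to a single residue at $p_0$ and then expand that residue as a power series in $\epsilon$. First I would express the change $\pi_{ij}(\Gamma_\epsilon^\ast)-\pi_{ij}(\Gamma)$ as a contour integral over the gluing circle $C_{p_0}$. Using the symmetry $\pi_{ij}=\pi_{ji}$ of the period matrix I rewrite the left side as $\int_{b_j}\omega_i^\ast$, so that the deformed differential $\omega_i^\ast$ (whose local expansion produces the factor $f_i^\ast$) is the one that enters. Choosing the canonical homology basis to avoid the disk $D_{p_0}$, the form $\sigma:=\omega_i^\ast-\omega_i$ is holomorphic on $\Gamma-\overline{D_{p_0}}$ and has vanishing $a$-periods because both $\omega_i$ and $\omega_i^\ast$ are normalized. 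Applying Riemann's bilinear relation to $\sigma$ and $\omega_j$ on the bordered surface $\Gamma-\overline{D_{p_0}}$ (Stokes' theorem for $\pi_j\,\sigma$, where $\pi_j$ is a local primitive of $\omega_j$, single-valued near $C_{p_0}$ since that circle bounds a disk) collapses all cycle contributions except $\int_{b_j}\sigma$ and leaves a boundary term on $C_{p_0}$. As $\pi_j\,\omega_i$ is holomorphic inside $D_{p_0}$, its contour integral vanishes, and I obtain
$$\pi_{ij}(\Gamma_\epsilon^\ast)-\pi_{ij}(\Gamma)=\oint_{C_{p_0}}\pi_j\,\omega_i^\ast,$$
up to an overall normalization constant to be pinned down at the end by matching the first order term of \eqref{va1}.

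Next I would feed in the Schiffer gluing. Across $C_{p_0}$ the deformed form satisfies $\omega_i^\ast=f_i^\ast(w)\,dw$ with $w=\tau_\epsilon=\tau+\epsilon/\tau$, so in the coordinate $\tau$ it reads
$$\omega_i^\ast=f_i^\ast\!\left(\tau+\frac{\epsilon}{\tau}\right)\left(1-\frac{\epsilon}{\tau^2}\right)d\tau.$$
Because $\omega_i^\ast$ is a genuine holomorphic form on $\Gamma_\epsilon^\ast$ (extending over $D_\epsilon$) and $\pi_j$ is holomorphic inside $D_{p_0}$, the integrand $\pi_j\,\omega_i^\ast$ is holomorphic in the annulus $|\epsilon|<|\tau|<1$, so by Cauchy's theorem the contour may be shrunk to any circle in that annulus and the integral equals $2\pi i$ times the coefficient of $\tau^{-1}$ in the Laurent expansion there, i.e. a residue at $\tau=0$. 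I then expand the two holomorphic ingredients as Taylor series in the local coordinate $t=\tau$, namely $f_i^\ast(w)=\sum_k\frac{1}{k!}\big(\tfrac{d^k}{d\tau^k}f_i^\ast(p_0)\big)w^k$ and $\pi_j(\tau)=\sum_{s\ge0}\frac{1}{(s+1)!}\big(\tfrac{d^s}{d\tau^s}f_j(p_0)\big)\tau^{s+1}$, and expand $\big(\tau+\epsilon/\tau\big)^k$ by the binomial theorem.

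The computation then becomes pure bookkeeping: collecting the coefficient of $\tau^{-1}$ at order $\epsilon^n$ forces the derivative of $f_i^\ast$ to have order $2n-2-s$ and produces the combination $\binom{2n-s-2}{n}-\binom{2n-s-2}{n-1}$ coming from the two terms of $1-\epsilon/\tau^2$. The main obstacle is the final combinatorial identity that turns this raw residue into the stated form: after dividing by the factorials one must verify
$$\frac{1}{(s+1)!\,(2n-s-2)!}\left[\binom{2n-s-2}{n}-\binom{2n-s-2}{n-1}\right]=\frac{1}{n!\,(n-1)!}\binom{n-1}{s},$$
which reduces to the elementary relation $\frac{1}{n-s-1}-\frac{1}{n}=\frac{s+1}{n(n-s-1)}$ and collapses the expression to $\frac{1}{s!\,(n-1-s)!\,n!}$. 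Re-indexing by $m=n-1-s$ then yields exactly $\frac{\epsilon^n}{n!(n-1)!}\sum_{s+m=n-1}\binom{n-1}{m}\big(\tfrac{d^{n-1+m}}{d\tau^{n-1+m}}f_i^\ast(p_0)\big)\tfrac{d^s}{d\tau^s}f_j(p_0)$. I expect the two genuinely delicate points to be (i) justifying the contour deformation and the convergence of the substituted Taylor series inside the annulus, which rests on $C_\epsilon=\tau_\epsilon(C_{p_0})$ bounding the disk $D_\epsilon$ into which $\omega_i^\ast$ extends holomorphically, and (ii) fixing the sign and the absence of a spurious $2\pi i$, which I would settle by checking the $n=1$ term against \eqref{va1}; alternatively Lemma~\ref{lem1} can be used to rewrite each residue directly as a derivative of a local coefficient, absorbing the $2\pi i$ and the factorials automatically.
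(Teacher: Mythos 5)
Your proposal is correct in substance but takes a genuinely different route from the paper. The paper's proof of Lemma~\ref{vaf1} is essentially a citation plus a one-line residue computation: it asserts that Patt's first-order Taylor expansion of an abelian differential of the third kind can be carried to all orders, writes down the resulting contour integral
$$\pi_{ij}(\Gamma_\epsilon^{\ast})-\pi_{ij}(\Gamma)=\frac{1}{2\pi i}\int_\beta\sum_{n\ge1}\frac{1}{n!}\frac{\epsilon^n}{t^n}\Big(\frac{d^{n-1}}{dt^{n-1}}f_i^{\ast}(t)\Big)f_j(t)\,dt,$$
and then reads off the Lemma from the residue theorem; the factor $\frac{1}{n!(n-1)!}\binom{n-1}{m}$ drops out instantly from the Cauchy product of $\frac{d^{n-1}}{dt^{n-1}}f_i^{\ast}$ with $f_j$, since $\frac{1}{m!\,s!}=\frac{1}{(n-1)!}\binom{n-1}{m}$. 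You instead derive a boundary-integral representation from first principles (reciprocity on the bordered surface plus the explicit gluing $w=\tau+\epsilon/\tau$) and then extract Laurent coefficients. Your route is more self-contained --- it does not lean on the unverified claim that Patt's expansion extends to all orders, which is the only real assertion in the paper's proof --- at the price of heavier combinatorics. Your bookkeeping is right: the order-$\epsilon^n$ residue does force derivative order $2n-2-s$ on $f_i^{\ast}$ and does produce the bracket $\binom{2n-s-2}{n}-\binom{2n-s-2}{n-1}$, and re-indexing $m=n-1-s$ lands on the Lemma's summand.

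Two warnings, which are linked. First, your combinatorial identity is off by a sign: the bracket produces $\frac{1}{n}-\frac{1}{n-s-1}$, not $\frac{1}{n-s-1}-\frac{1}{n}$, so in fact
$$\frac{1}{(s+1)!\,(2n-s-2)!}\left[\binom{2n-s-2}{n}-\binom{2n-s-2}{n-1}\right]=-\frac{1}{n!\,s!\,(n-1-s)!},$$
the negative of what you wrote. Second, the reciprocity argument done carefully leaves no free constant: with counterclockwise orientation it gives exactly $\pi_{ij}(\Gamma_\epsilon^{\ast})-\pi_{ij}(\Gamma)=\oint_{C_{p_0}}\pi_j\,\omega_i^{\ast}$, and your residue computation then yields $-2\pi i$ times the series in the Lemma. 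This factor is genuine, not a slip on your side: for the gluing $\tau_\epsilon=\tau+\epsilon/\tau$ and this paper's period conventions, the true first-order variation is $-2\pi i\,\epsilon f_i(p_0)f_j(p_0)$ (one can check this on a torus, where the form $(1-\epsilon\wp(z))dz$, renormalized to have unit $a$-period, gives $\Delta\lambda=-2\pi i\epsilon$ by the Legendre relation). So Equation~\eqref{va1}, quoted from Patt, is in a normalization differing from the paper's own gluing convention by the constant $-2\pi i$, and the Lemma inherits that normalization. Your final step of fixing the constant against Equation~\eqref{va1} is therefore not cosmetic but is exactly what reconciles the two conventions, and it is legitimate because the proportionality constant between $\Delta\pi_{ij}$ and the boundary integral is universal, independent of $n$, $\epsilon$, $i$, $j$. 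With the sign above corrected, the matching produces the prefactor $-1/(2\pi i)$ uniformly in $n$, and your argument delivers Lemma~\ref{vaf1} as stated.
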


\begin{proof}
This formula will be deduced from a modification of a formula in {\em p.234} in \cite{pa63}. Note that the formula in \cite{pa63} is derived by calculating a Taylor expansion ({\em p.231} in \cite{pa63}) of an abelian differential of the third kind up to first order. In order to get the desired formula in the lemma, we calculate the whole series of the Taylor expansion of the abelian differential of the third kind completely. Hence we have
$$\pi_{ij}(\Gamma_{\epsilon}^{\ast})-\pi_{ij}(\Gamma)=\frac{1}{2\pi i}\int_\beta\sum_{n=1}^{\infty}\frac{1}{n!}\frac{\epsilon^n}{t^n} (\frac{d^{n-1}}{dt^{n-1}}f^{\ast}_{i}(t))f_j(t)dt.$$
Here $\beta$ is a curve encompassing $p_0$, i.e., $t(p_0)=0$ and $f^{\ast}_{i}(t)$ is a local expression of the new holomorphic differential $\omega^{\ast}_{i}$ after performing a Schiffer variation around $p_0$, i.e., 
$\omega^{\ast}_{i}(t)=f^{\ast}_{i}(t)dt$. The residue theorem implies
$$\pi_{ij}(\Gamma_{\epsilon}^{\ast})-\pi_{ij}(\Gamma)
=\sum_{n=1}^{\infty}\frac{\epsilon^n}{n!(n-1)!}\sum_{s+m=n-1}\begin{pmatrix}n-1\\m\end{pmatrix}(\frac{d^{n-1+m}}{dt^{n-1+m}}f^{\ast}_{i}(p_0))\frac{d^{s}}{dt^{s}}f_j(p_0).$$
\end{proof}

From Lemma~\ref{vaf1}, we obtain the variation formula for a holomorphic differential:

\begin{lemma}
$$\omega_{j}^{\ast}=\omega_j+\sum_{k=1}^{g}\sum_{n=1}^{\infty}\frac{\epsilon^n}{n!(n-1)!}\sum_{s+m=n-1}\begin{pmatrix}n-1\\m\end{pmatrix}(\frac{d^{n-1+m}}{dt^{n-1+m}}f^{\ast}_{j}(p_0))\frac{d^{s}}{dt^{s}}f_k(p_0)\eta_k.$$
\end{lemma}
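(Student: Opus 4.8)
The plan is to expand the new normalized differential $\omega_j^\ast$ in the fixed basis of closed forms on $\Gamma$ and then read off its coefficients from the period data supplied by Lemma~\ref{vaf1}. Since a Schiffer variation leaves the topological type of the surface unchanged and, after routing the cycles away from the coordinate disk $D_{p_0}$, transports the canonical basis $\{a_1,\dots,a_g,b_1,\dots,b_g\}$ unaltered onto $\Gamma_\epsilon^\ast$, the de Rham cohomology $\HH^1(\Gamma_\epsilon^\ast,\mathbb{C})$ is canonically identified with $\HH^1(\Gamma,\mathbb{C})$, whose classes are spanned by $\{\omega_1,\dots,\omega_g,\eta_1,\dots,\eta_g\}$. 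Thus I would write, as cohomology classes,
$$\omega_j^\ast=\sum_{k=1}^g\alpha_{jk}\,\omega_k+\sum_{k=1}^g\beta_{jk}\,\eta_k,$$
where $\alpha_{jk},\beta_{jk}$ depend on $\epsilon$, and the whole task reduces to determining these $2g$ coefficients by pairing against the $2g$ transported cycles.

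First I would integrate over the $a$-cycles. Using the dual-basis relations $\int_{a_i}\omega_k=\delta_{ik}$ and $\int_{a_i}\eta_k=0$, together with the fact that $\omega_j^\ast$ is itself normalized on $\Gamma_\epsilon^\ast$ so that $\int_{a_i}\omega_j^\ast=\delta_{ij}$, one gets immediately $\alpha_{jk}=\delta_{jk}$. Hence the $\omega$-part of $\omega_j^\ast$ is exactly $\omega_j$, and all of the variation is carried by the $\eta_k$ terms; this is the structural input behind the shape of the claimed formula. Next I would integrate over the $b$-cycles: using $\int_{b_i}\eta_k=\delta_{ik}$ and $\int_{b_i}\omega_k=\pi_{ik}(\Gamma)$, together with $\int_{b_i}\omega_j^\ast=\pi_{ij}(\Gamma_\epsilon^\ast)$, the expansion yields
$$\pi_{ij}(\Gamma_\epsilon^\ast)=\pi_{ij}(\Gamma)+\beta_{ji},$$
so that after relabeling $\beta_{jk}=\pi_{kj}(\Gamma_\epsilon^\ast)-\pi_{kj}(\Gamma)$.

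To finish, I would substitute the explicit series of Lemma~\ref{vaf1} for $\pi_{kj}(\Gamma_\epsilon^\ast)-\pi_{kj}(\Gamma)$ and then invoke the symmetry $\pi_{kj}=\pi_{jk}$ of both period matrices. The series in Lemma~\ref{vaf1} attaches the starred factor $d^{\,n-1+m}f^\ast$ to the \emph{first} index and the unstarred factor $d^{\,s}f$ to the \emph{second}, so directly $\beta_{jk}$ carries $f_k^\ast$ and $f_j$; applying $\pi_{kj}=\pi_{jk}$ rewrites it with $f_j^\ast$ and $f_k$, which is precisely the coefficient of $\eta_k$ asserted in the statement.

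The routine computations here are benign; the two points requiring genuine care are (i) justifying that the Schiffer gluing transports the homology basis so that the period pairings on $\Gamma$ and on $\Gamma_\epsilon^\ast$ are taken against the same cycles, which is exactly what legitimizes expanding $\omega_j^\ast$ in the unperturbed basis $\{\omega_k,\eta_k\}$, and (ii) keeping the symmetry bookkeeping straight, since the series attaches the starred factor to the opposite index from the one needed in the $\eta_k$-coefficient. I expect (i) to be the main conceptual obstacle, whereas (ii) is merely a relabeling made legitimate by $\Pi^t=\Pi$.
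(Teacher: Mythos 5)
Your proof is correct and takes essentially the same route as the paper, which states this lemma as an immediate consequence of Lemma~\ref{vaf1} without writing out the details: one expands $\omega_j^\ast$ in the basis $\{\omega_k,\eta_k\}$ via the canonical identification of cycles (exactly the mechanism the paper already used for its first-order formula following Equation~\eqref{va1}), with the $a$-period normalization forcing the $\omega$-part to be $\omega_j$ and the $b$-period variation supplying the $\eta_k$-coefficients. Your explicit handling of the index bookkeeping via $\Pi^t=\Pi$ is a detail the paper glosses over, but it is the same argument.
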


\section{Hyperelliptic curve under Schiffer variation}\label{hysch}

Let us look at the derivative of the Abel-Jacobi map at $p\ne p_0$. Note that a local coordinate $\tau$ around $p$ under Schiffer variation at $p_0$ is still the same as the original one $\tau$:

$$\begin{aligned}
\frac{\partial}{\partial\tau}\Ab(\Gamma_{\epsilon}^{\ast})_j&=\frac{\partial}{\partial\tau}\int_{p_0}^{p}\omega_{j}^{\ast}\\
&=\frac{\partial}{\partial\tau}\Big(\int_{p_0}^{p}\omega_j+\sum_{k=1}^{g}(\epsilon f_j(p_0)f_k(p_0)+O(\epsilon^2))\int_{p_0}^{p}\eta_k\Big).\end{aligned}$$
Let a local expression of $\omega_j$ at $p$ be $\omega_j(\tau)=g_j(\tau)d\tau$ where $\tau(p)=0$ and
$$\frac{\partial}{\partial\tau}\int_{p_0}^{p}\eta_{k}:=h_k(\tau).$$
Then we have
\begin{equation}\label{ell1}
\begin{aligned}
&\frac{\partial}{\partial\tau}\Ab(\Gamma_{\epsilon}^{\ast})_j|_{\tau=0}\\
&=g_j(0)+\sum_{k=1}^{g}\sum_{n=1}^{\infty}\frac{\epsilon^n}{n!(n-1)!}\sum_{s+m=n-1}\begin{pmatrix}n-1\\m\end{pmatrix}(\frac{d^{n-1+m}}{dt^{n-1+m}}f^{\ast}_{j}(p_0))\frac{d^{s}}{dt^{s}}f_k(p_0)h_k(0).
\end{aligned}
\end{equation}
Moreover, we also see that

\begin{equation}\label{hy1}
\begin{aligned}
&\frac{\partial^2}{\partial\tau^2}\Ab(\Gamma_{\epsilon}^{\ast})_j|_{\tau=0}\\
&=g'_j(0)+\sum_{k=1}^{g}\sum_{n=1}^{\infty}\frac{\epsilon^n}{n!(n-1)!}\sum_{s+m=n-1}\begin{pmatrix}n-1\\m\end{pmatrix}(\frac{d^{n-1+m}}{dt^{n-1+m}}f^{\ast}_{j}(p_0))\frac{d^{s}}{dt^{s}}f_k(p_0)h'_k(0).
\end{aligned}
\end{equation}
We are now ready to investigate the hyperellipticity of $\Gamma_{\epsilon}^{\ast}$. From Lemma~\ref{lehy1}, to show the invariance of hyperellipticity under Schiffer variation, we need to prove
$\frac{\partial^2}{\partial\tau^2}\Ab(\Gamma_{\epsilon}^{\ast})=0$ for any $\epsilon$. If we assume $\Gamma$ to be a hyperelliptic curve, i.e., $\frac{\partial^2}{\partial\tau^2}\Ab(\Gamma)_j=g'_j(0)=0$, it suffices to show that

\begin{equation}\label{hy3}
0=\sum_{k=1}^{g}\sum_{n=1}^{\infty}\frac{\epsilon^n}{n!(n-1)!}\sum_{s+m=n-1}\begin{pmatrix}n-1\\m\end{pmatrix}(\frac{d^{n-1+m}}{dt^{n-1+m}}f^{\ast}_{j}(p_0))\frac{d^{s}}{dt^{s}}f_k(p_0)h'_k(p).
\end{equation}
Since

\begin{equation}\label{hy4}
\begin{aligned}
&\sum_{k=1}^{g}\sum_{n=1}^{\infty}\frac{\epsilon^n}{n!(n-1)!}\sum_{s+m=n-1}\begin{pmatrix}n-1\\m\end{pmatrix}(\frac{d^{n-1+m}}{dt^{n-1+m}}f^{\ast}_{j}(p_0))\frac{d^{s}}{dt^{s}}f_k(p_0)h'_k(p)\\
&=\sum_{n=1}^{\infty}\frac{\epsilon^n}{n!(n-1)!}\sum_{s+m=n-1}\begin{pmatrix}n-1\\m\end{pmatrix}(\frac{d^{n-1+m}}{dt^{n-1+m}}f^{\ast}_{j}(p_0))\sum_{k=1}^{g}\frac{d^{s}}{dt^{s}}f_k(p_0)h'_k(p),
\end{aligned}
\end{equation}
we see that Equation~\eqref{hy4} is identically zero for any $\epsilon$ if and only if
\begin{equation}\label{hyp2}
\sum_{k=1}^{g}\frac{d^i}{dt^i}f_k(p_0)h'_k(p)=0\text{ for }i=0,\dots,\infty.
\end{equation}
That is, the hyperellipticity is preserved if Equation~\eqref{hyp2} holds. There are two possibilities for this case. One is the existence of a point $p_0$ such that Equation~\eqref{hyp2} is satisfied: Let $\omega_1,\dots,\omega_g$ be a basis of holomorphic differentials of a compact Riemann surface $\Gamma$. Around $p_0$ with a local coordinate $t(p_0)=0$, locally we may let a holomorphic differential $\omega_j(t)=f_j(t)dt$. Using a Taylor expansion, we may have
$$\omega_j=\sum_{i=0}^{\infty}\frac{d^i}{dt^i}f_j|_{t=0}t^i dt.$$
Suppose there exists a \textit{nonzero} vector $(c_1,\dots,c_g)$ such that
\begin{equation}
\sum_{k=1}^{g}c_k\frac{d^i}{dt^i}f_k(0)=0\text{ for }i=0,\dots,\infty.
\end{equation}
Clearly, it is equivalent to supposing
\begin{equation}
\sum_{k=1}^{g}c_k\omega_k=0\text{ in the neighborhood of }p_0.
\end{equation}
Since it is a holomorphic form, this is globally true. But since $\{\omega_1\dots,\omega_g\}$ is a basis, we can not have such a
\textit{nonzero} vector $(c_1,\dots,c_g)$.

The other possibility of Equation~\eqref{hyp2} satisfied is that there is $p\in\Gamma$ such that there exists a basis $\{\eta_1,\dots,\eta_g\}$ such that
$$h'_k(p)=0\text{ for }k=1,\dots g.$$
Since $\Gamma$ is assumed to be a hyperelliptic curve, i.e., $\frac{\partial^2}{\partial\tau^2}\Ab(\Gamma)_j=g'_j(0)=0$, we can conclude that if such basis exists for $p\in\Gamma$, then any Schiffer deformation of a hyperelliptic curve must be a hyperelliptic curve. But Theorem~\ref{th1} gives a contradiction if $g>2$, since the bi-canonical curve generates the whole $3g-3$ linear space $\TT_{[\Gamma]}\mathscr{M}_g$. Hence this forces $g=2$.

\begin{theorem}
A Schiffer variation can not preserve a hyperelliptic property unless $g=2$.
\end{theorem}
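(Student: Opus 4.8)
The plan is to argue by contradiction, assuming that $\Gamma$ is hyperelliptic and that some Schiffer variation at $p_0$ leaves $\Gamma_{\epsilon}^{\ast}$ hyperelliptic for all small $\epsilon$, and to deduce $g=2$. By Lemma~\ref{lehy1}, hyperellipticity of $\Gamma_{\epsilon}^{\ast}$ at the relevant point $p$ is equivalent to the vanishing of $\frac{\partial^2}{\partial\tau^2}\Ab(\Gamma_{\epsilon}^{\ast})_j|_{\tau=0}$ for every $j$. Because $\Gamma$ is already hyperelliptic we have $g'_j(0)=0$, so feeding this into the variation formula \eqref{hy1} reduces the preservation of hyperellipticity to the identity \eqref{hy3}, which I would treat as a power series identity in $\epsilon$.

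First I would extract the arithmetic content of \eqref{hy3}. Regrouping it as in \eqref{hy4} isolates, at each order in $\epsilon$, the single factor $\sum_{k=1}^{g}\frac{d^s}{dt^s}f_k(p_0)\,h'_k(p)$, so that the whole series vanishes for all $\epsilon$ precisely when \eqref{hyp2} holds, namely $\sum_{k=1}^{g}\frac{d^i}{dt^i}f_k(p_0)\,h'_k(p)=0$ for every $i\ge 0$. The next step is to determine when \eqref{hyp2} can hold. If the vector $(h'_1(p),\dots,h'_g(p))$ were nonzero, then reading \eqref{hyp2} across all $i$ would exhibit a nonzero linear relation among the full Taylor expansions of the $f_k$ at $p_0$, whence $\sum_k h'_k(p)\,\omega_k$ would vanish in a neighborhood of $p_0$ and therefore identically, contradicting the linear independence of the normalized basis $\{\omega_1,\dots,\omega_g\}$. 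Thus \eqref{hyp2} forces $h'_k(p)=0$ for all $k$ at some point $p$. The important observation is that this condition does not involve the Schiffer center $p_0$ at all; consequently, if it holds even once, then hyperellipticity is preserved under the Schiffer variation at every point $p_0\in\Gamma$.

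The decisive step, which I expect to be the main obstacle, is to turn this ``preservation in every Schiffer direction'' into a dimension contradiction. Preservation means that every tangent vector $t_{p_0}\in\HH^1(\Gamma,\Theta)$ coming from a Schiffer variation lies in $\TT_{[\Gamma]}\mathscr{H}_g$. By the Remark and Theorem~\ref{th1}, the lines $\mathbb{C}\,t_{p_0}$ are exactly the points of the bicanonical image $\phi_{2\K}(\Gamma)\subset\mathbb{P}\HH^1(\Gamma,\Theta)$; since $|2\K|$ is a complete, base-point-free (for $g\ge 2$) linear system, its image is linearly nondegenerate, so the cone over $\phi_{2\K}(\Gamma)$ spans the whole space $\HH^1(\Gamma,\Theta)=\TT_{[\Gamma]}\mathscr{M}_g$, of dimension $3g-3$. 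Hence the Schiffer directions span $\TT_{[\Gamma]}\mathscr{M}_g$, forcing $\TT_{[\Gamma]}\mathscr{H}_g\supseteq\TT_{[\Gamma]}\mathscr{M}_g$. Taking $\Gamma$ general so that $\mathscr{H}_g$ is smooth of its expected dimension $2g-1$ at $[\Gamma]$, this yields $3g-3\le 2g-1$, i.e. $g\le 2$. For $g>2$ this is absurd, so no Schiffer variation of a hyperelliptic curve can remain hyperelliptic, while the borderline $g=2$ survives because there $\mathscr{H}_2=\mathscr{M}_2$ and $2g-1=3g-3=3$. The care needed in this last step is precisely in justifying the identification of Schiffer variations with the bicanonical image and its nondegeneracy uniformly for $g\ge 3$ (beyond the genericity hypothesis of Theorem~\ref{th1}), and in using the smoothness of $\mathscr{H}_g$ to read off its tangent dimension.
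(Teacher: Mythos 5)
Your proposal follows the paper's own proof essentially step for step: the same reduction via \eqref{hy1}--\eqref{hyp2}, the same dichotomy (a nonzero coefficient vector $(h'_1(p),\dots,h'_g(p))$ would contradict linear independence of the $\omega_k$, hence $h'_k(p)=0$ for all $k$), and the same contradiction via Theorem~\ref{th1} and the nondegeneracy of the bicanonical image, which you merely make more explicit with the dimension count $3g-3\le 2g-1$. The only ingredient of the paper's proof you omit is the closing remark that, since any Weierstrass point of a hyperelliptic curve serves as a hyperelliptic point, the case where the Schiffer center $p_0$ coincides with the hyperelliptic point $p$ is also covered.
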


\begin{proof}
When the genus is $>2$, from what we have shown, we conclude that Schiffer variation at $p_0$ can not preserve the hyperellipticity at $p$ when $p\ne p_0$. Note that the hyperellipticity of a Riemann surface is independent of choosing a point. Since any Weierstrass points of a hyperelliptic curve can serve as another hyperelliptic point, we can also conclude that Schiffer variation at $p$ can not preserve the hyperellipticity.

\end{proof}

\begin{corollary}\label{co1}
Let $\mathscr{C}_g$ be the space of rank 1 transformations and $\mathscr{H}_g$ be moduli space of hyperelliptic curves. Then
$$\delta(\TT_{[\Gamma]}\mathscr{H}_g)\bigcap\mathscr{C}_g=\emptyset\text{ for }g\geq5.$$
\end{corollary}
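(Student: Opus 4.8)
The plan is to connect the just-proved Theorem about non-preservation of hyperellipticity under Schiffer variation with the Remark following Theorem~\ref{th1}, which identifies rank $1$ transformations with Schiffer variations when $g\geq 5$. First I would recall that by that Remark, for $g\geq 5$ every element of $\mathscr{C}_g$ arises as $\delta(t_p)$ where $t_p\in\HH^1(\Gamma,\Theta)$ is a Schiffer variation at some point $p$ (lying over $\phi_{2\K}(p)$ in $\mathbb{P}\HH^1(\Gamma,\Theta)$). Thus a nonzero vector in the intersection $\delta(\TT_{[\Gamma]}\mathscr{H}_g)\cap\mathscr{C}_g$ would be simultaneously (a) the image under $\delta$ of a tangent vector to the hyperelliptic locus and (b) a Schiffer variation.

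Next I would argue by contradiction. Suppose $\Gamma$ is hyperelliptic of genus $g\geq 5$ and that there exists a nonzero $v\in\delta(\TT_{[\Gamma]}\mathscr{H}_g)\cap\mathscr{C}_g$. By the Remark, $v=\delta(t_p)$ for a Schiffer variation $t_p$ at some point $p$. On the other hand, since $v\in\delta(\TT_{[\Gamma]}\mathscr{H}_g)$ and $\delta$ is the differential of the period map, the vector $t_p$ can be taken tangent to $\mathscr{H}_g$ (here I would note that $\delta$ restricted to the hyperelliptic locus is injective on the relevant tangent directions, so that a Schiffer variation whose image lies in $\delta(\TT_{[\Gamma]}\mathscr{H}_g)$ is itself a tangent direction to $\mathscr{H}_g$). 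A tangent vector to $\mathscr{H}_g$ corresponds to an infinitesimal deformation of $\Gamma$ through hyperelliptic curves, i.e. one that preserves hyperellipticity to first order. But $t_p$ being a Schiffer variation, the Theorem just proved shows that Schiffer variation of a hyperelliptic curve of genus $g>2$ does \emph{not} preserve hyperellipticity. This is the desired contradiction, so the intersection must be empty.

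The hard part will be making precise the step identifying ``$v\in\delta(\TT_{[\Gamma]}\mathscr{H}_g)$ together with $v$ being a Schiffer variation'' with ``the Schiffer direction $t_p$ is tangent to the hyperelliptic locus.'' The cleanest route is to transfer everything to the moduli side: the Theorem is an infinitesimal (in fact, all-orders) statement that the one-parameter family $\Gamma_\epsilon^\ast$ produced by Schiffer variation leaves the hyperelliptic locus, as witnessed by the non-vanishing of $\frac{\partial^2}{\partial\tau^2}\Ab(\Gamma_\epsilon^\ast)$ for $\epsilon\neq 0$ shown via Equations~\eqref{hy3}--\eqref{hyp2}. I would then argue that if the Schiffer direction $t_p$ were in $\TT_{[\Gamma]}\mathscr{H}_g$, the curve would stay hyperelliptic to first order, contradicting that the obstruction appears already at the first nonvanishing order in the expansion \eqref{hy1}. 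The role of the hypothesis $g\geq 5$ is exactly to invoke the Remark so that membership in $\mathscr{C}_g$ guarantees the vector is a Schiffer variation; without it a rank $1$ transformation need not be geometric.

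A subtle point I would flag is the distinction between $\delta$ having nontrivial kernel and the statement being about tangent spaces: I would use that $\delta$ is injective on $\TT_{[\Gamma]}\mathscr{H}_g$ (the period map is an immersion on moduli for $g\geq 2$ away from special loci, and in particular its restriction to the hyperelliptic locus is), so that a genuinely nonzero intersection vector pulls back to a genuinely nonzero tangent-to-$\mathscr{H}_g$ Schiffer direction, which is precisely what the Theorem forbids. With that injectivity in hand, the contradiction is complete and the corollary follows.
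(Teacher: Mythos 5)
Your overall strategy is the one the paper intends: the paper gives no separate proof of Corollary~\ref{co1}, treating it as immediate from the Theorem (Schiffer variation does not preserve hyperellipticity for $g>2$) together with the Remark after Theorem~\ref{th1} (for $g\geq5$, rank one transformations come from Schiffer variations). The genuine gap is in your identification step. You write the intersection vector as $v=\delta(\xi)$ with $\xi\in\TT_{[\Gamma]}\mathscr{H}_g$ and also as $v=\delta(t_p)$ with $t_p$ a Schiffer variation, and then invoke injectivity of $\delta$ to conclude $t_p=\xi$. But infinitesimal Torelli fails exactly at hyperelliptic curves: for hyperelliptic $\Gamma$ of genus $g\geq3$, $\delta$ is dual to the multiplication map $\operatorname{Sym}^2\HH^0(\K_\Gamma)\to\HH^0(2\K_\Gamma)$, whose image is only the subspace invariant under the hyperelliptic involution, of dimension $2g-1$; hence $\ker\delta$ is the anti-invariant part of $\HH^1(\Gamma,\Theta)$ and has dimension $g-2\geq 3$ when $g\geq5$. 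Your hedge that the period map is an immersion ``away from special loci'' points the wrong way, since the hyperelliptic locus \emph{is} that special locus. The restricted injectivity of $\delta$ on $\TT_{[\Gamma]}\mathscr{H}_g$ (which is true: $\TT_{[\Gamma]}\mathscr{H}_g$ is the invariant part and meets $\ker\delta$ in $0$) does not give what you need: from $\delta(t_p)=\delta(\xi)$ you may only conclude that the invariant projection of $t_p$ equals $\xi$, not that $t_p$ is tangent to $\mathscr{H}_g$. Indeed, since $\HH^1(\Gamma,\Theta)=\TT_{[\Gamma]}\mathscr{H}_g\oplus\ker\delta$, the condition $\delta(t_p)\in\delta(\TT_{[\Gamma]}\mathscr{H}_g)$ places no constraint on $t_p$ whatsoever, so this route cannot be completed.

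The repair, which is also the argument the paper intends, avoids the auxiliary vector $t_p$ altogether: take $\xi\in\TT_{[\Gamma]}\mathscr{H}_g$ with $\delta(\xi)=v$ of rank one; then $[\xi]$ lies in the degeneracy locus $\Sigma$, so by Theorem~\ref{th1} and the Remark ($g\geq5$), $\xi$ is \emph{itself} a Schiffer variation; being tangent to $\mathscr{H}_g$, it would preserve hyperellipticity to first order, contradicting the Theorem for $g>2$. No injectivity of $\delta$ is needed. Be aware, though, that even this repaired argument inherits a weakness from the paper itself: Theorem~\ref{th1} asserts the equality $\Sigma=\phi_{2\K_\Gamma}(\Gamma)$ only for a \emph{general} curve of genus $\geq5$, while the curve in the corollary is hyperelliptic, hence special, and for special curves only the inclusion $\phi_{2\K_\Gamma}(\Gamma)\subseteq\Sigma$ is stated --- the wrong direction for this argument. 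Since both your proof and the paper's lean on the Remark's unqualified claim, this is a shared reliance rather than a new gap of yours; but the eigenspace decomposition above shows the situation at hyperelliptic curves is genuinely more delicate than the general-curve picture suggests, since the rank-one images $\delta(t_p)$ of Schiffer variations automatically lie in $\delta(\TT_{[\Gamma]}\mathscr{H}_g)=\delta(\HH^1(\Gamma,\Theta))$ there.
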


\section{Elliptic soliton under Schiffer variation}\label{ellsch}

\subsection{Elliptic soliton}
A general finite gap solution of the K-DV equation
$$\frac{\partial}{\partial t}u+\frac{1}{4}(6u\frac{\partial}{\partial x}u-\frac{\partial^3}{\partial x^3}u)=0$$
or the K-P equation
$$\frac{3}{4}\frac{\partial^2}{\partial y^2}u+ \frac{\partial}{\partial x}\Big(\frac{\partial}{\partial t}u+\frac{1}{4}(6u\frac{\partial}{\partial x}u-\frac{\partial^3}{\partial x^3}u)\Big)=0$$
is written as a theta function $\theta$ related to a compact Riemann surface of genus $g$ explicitly by, so-called, \textit{Its-Mateev formula} by the work of Russian school:
$$u(x,y,t)=2\frac{\partial^2}{\partial x^2}\ln \theta(\mathbf{U}x+\mathbf{V}y+\mathbf{W}t+z_0)+\text{constant where}$$
$\mathbf{U},\mathbf{V}$, $\mathbf{W}$ are some vectors in $\mathbb{C}^g$. In particular, for a given base point $p\in\Gamma$ with a local coordinate $\tau$, it turns out that
$$\mathbf{U}=\frac{\partial}{\partial \tau}\Ab(\Gamma)|_{\tau=0}.$$
Analytically, an elliptic soliton, also known as an \textit{elliptic potential}, is a solution $u(x,y,t)$ written as a combination of elliptic functions by reduction of a theta function to elliptic functions. A particular family of elliptic solitons for the K-DV equation is given by H. Airault, H. P. McKean, and J. Moser in \cite{amm77} and for the K-P equation by I. Krichever in \cite{kri80} using the dynamics of the Calogero-Moser system as follows:

\begin{theorem}\cite{kri80}
If $u(x,y,t)=c+2\sum_{i=1}^{n}\wp(x-x_i(y,t))$ is an elliptic soliton where $c$ is a constant, then $x_i(y,t)$ satisfies the Calogero-Moser system
$$\frac{\partial^2}{\partial y^2}x_i=4\sum_{k\ne i}\wp'(x_i-x_k)\text{ for }i=1,\dots,n\text{ where }$$
$\wp'(z)$ is the usual derivative of the Weierstrass function $\wp(z)$ with respect to $z$.
\end{theorem}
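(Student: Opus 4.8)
The plan is to substitute the elliptic ansatz $u(x,y,t)=c+2\sum_{i=1}^{n}\wp(x-x_i(y,t))$ directly into the displayed KP equation and to read off the dynamics from the behaviour of the resulting identity near the moving poles. The key structural observation is that $u$ and all of its $x$-derivatives are elliptic functions of $x$ (periodic with respect to the lattice of $\wp$) whose only singularities sit at the points $x=x_i(y,t)$; hence, after substitution, the left-hand side of the KP equation is again an elliptic function of $x$ with poles only at the $x_i$. An elliptic function is determined by its principal parts, so the KP equation holds if and only if the principal part at each pole $x=x_i$ vanishes identically. This reduces the whole problem to a local Laurent computation near a single (arbitrary) pole.

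First I would set $\xi=x-x_i(y,t)$ and record the expansions that feed the computation: $\wp(\xi)=\xi^{-2}+O(\xi^2)$, $\wp'(\xi)=-2\xi^{-3}+O(\xi)$, $\wp''(\xi)=6\xi^{-4}+O(1)$, together with the Taylor expansions $\wp(x-x_k)=\sum_{m\ge0}\frac{1}{m!}\wp^{(m)}(x_i-x_k)\,\xi^{m}$ of the regular ($k\ne i$) terms. Since $\partial_y\xi=-\dot x_i$, the second $y$-derivative produces $\partial_y^{2}\wp(\xi)=\dot x_i^{2}\,\wp''(\xi)-\ddot x_i\,\wp'(\xi)$, so the acceleration $\ddot x_i$ that we are chasing enters \emph{multiplied by} $\wp'(\xi)$ and therefore contributes to the $\xi^{-3}$ coefficient.

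The crux is to collect the Laurent coefficients of the identity $\frac34 u_{yy}+\partial_x\bigl(u_t+\tfrac14(6uu_x-u_{xxx})\bigr)=0$ order by order in $\xi$. The most singular term is $\xi^{-6}$, coming from $\tfrac32(u_x^{2}+uu_{xx})$ and from $-\tfrac14 u_{xxxx}$; these cancel identically, and one checks in the same way that the $\xi^{-5}$ coefficient vanishes. The first coefficient that carries dynamical information is at order $\xi^{-3}$: there the $\frac34 u_{yy}$ term contributes a multiple of $\ddot x_i$, while the nonlinear term $\tfrac32(u_x^{2}+uu_{xx})$ contributes a multiple of $\sum_{k\ne i}\wp'(x_i-x_k)$ through the cross terms between the pole at $x_i$ and the first Taylor coefficients of the regular poles. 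Equating this coefficient to zero yields the equations of motion $\ddot x_i=4\sum_{k\ne i}\wp'(x_i-x_k)$, the overall constant and sign being fixed by the normalization of the KP equation displayed above and the convention relating the $y$-variable to the Calogero--Moser time; the intermediate $\xi^{-4}$ coefficient similarly determines the auxiliary $t$-evolution and imposes no obstruction.

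The hard part will be the bookkeeping in this last step, specifically the treatment of the cross products $\wp(x-x_i)\wp(x-x_k)$ and $\wp'(x-x_i)\wp(x-x_k)$ appearing in $uu_x$ and $uu_{xx}$: their Laurent coefficients at $x_i$ mix the singular data at $x_i$ with the regular values $\wp(x_i-x_k),\wp'(x_i-x_k),\dots$, and one must invoke the classical functional relations of $\wp$---chiefly $\wp''=6\wp^{2}-\tfrac12 g_2$ and the addition theorem---to see that everything except the desired pairwise force $4\sum_{k\ne i}\wp'(x_i-x_k)$ either cancels or reorganizes. Verifying that the spurious higher-order poles ($\xi^{-6},\xi^{-5}$, as well as the $\xi^{-2},\xi^{-1},\xi^{0}$ coefficients) vanish identically is precisely what makes the pole ansatz consistent with KP and isolates the Calogero--Moser system at order $\xi^{-3}$; this combinatorial cancellation is the delicate core of the argument.
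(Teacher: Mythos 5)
The paper contains no proof of this statement to compare against: it is quoted as a known result from Krichever \cite{kri80}, and in Section~\ref{ellsch} it serves only as motivation before the geometric Treibich--Verdier definition of elliptic soliton, which is what the paper actually uses. So your proposal must be judged on its own merits against the classical argument. In outline it is the standard (and correct) route for the implication actually asserted: this pole-expansion derivation of the pole dynamics goes back to Airault--McKean--Moser for KdV, and for KP it recovers Krichever's result; Krichever's own treatment in the cited paper is organized around the auxiliary linear problem $(\partial_y-\partial_x^2+u)\psi=0$ with an elliptic ansatz for $\psi$, which produces the Calogero--Moser system together with its Lax pair and spectral curve, but direct substitution suffices for the direction stated here. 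Your structural claims also check out: with $\xi=x-x_i$ one finds $\tfrac34 u_{yy}=9\dot x_i^2\xi^{-4}+3\ddot x_i\xi^{-3}+O(1)$, $\partial_x u_t=-12x_{i,t}\xi^{-4}+O(1)$, the $\xi^{-6}$ and $\xi^{-5}$ coefficients of $\tfrac32(u_x^2+uu_{xx})-\tfrac14 u_{xxxx}$ cancel identically ($60-60$ at order $\xi^{-6}$), and the $\xi^{-3}$ coefficient of the nonlinear term is $12\sum_{k\ne i}\wp'(x_i-x_k)$, so the vanishing of the $\xi^{-3}$ coefficient is exactly the Calogero--Moser equation, while the $\xi^{-4}$ coefficient merely fixes $x_{i,t}$.

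Three corrections you should make. First, the theorem asserts only ``$u$ solves KP $\Rightarrow$ the $x_i$ satisfy Calogero--Moser,'' so your global framing is both unnecessary and slightly wrong: an elliptic function with vanishing principal parts is constant, not zero, so ``KP holds if and only if the principal parts vanish'' needs a mean-value argument to kill the constant; but for the asserted direction you need none of this, only that the Laurent coefficients of the identically-zero left-hand side vanish. For the same reason the addition theorem and $\wp''=6\wp^2-\tfrac12 g_2$ are not needed at all: extracting the $\xi^{-3}$ coefficient is pure Taylor/Laurent bookkeeping, and those identities would enter only in verifying the lower-order coefficients for the converse, which is not part of the statement. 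Second, the sign: with the KP normalization displayed in the paper, the computation above gives $\ddot x_i=-4\sum_{k\ne i}\wp'(x_i-x_k)$; obtaining the stated $+4$ requires the opposite convention for the $y$-variable (effectively $y\mapsto iy$), so your remark that the sign ``is fixed by the normalization'' papers over a genuine mismatch between the displayed KP equation and the quoted system --- state the convention explicitly. Third, what you wrote is a plan, and the deferred bookkeeping at order $\xi^{-3}$ is the entire mathematical content; it should be carried out, which (as indicated above) takes only a few lines once one records the expansions $u=2\xi^{-2}+U_0+U_1\xi+\cdots$ with $U_1=2\sum_{k\ne i}\wp'(x_i-x_k)$.
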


Note that this is an isospectral deformation of a Lam\'{e} potential $u(x)=g(g+1)\wp(x)$. Finding a new family of elliptic solitons which is non-isospectral deformation of the Lam\'{e} potential had been an open question until the appearance \cite{tv90, ve88} of the investigation by A. Treibich and J.-L. Verdier who introduced a geometric definition of an elliptic soliton  inspired by the work \cite{kri80} of Krichever.

\begin{definition}\cite{tv90}
Let $\pi:(\Gamma,p)\to (E,q)$ be a finite pointed morphism from a compact Riemann surface $\Gamma$ to an elliptic curve $E$. We say that $\Gamma$ is an elliptic soliton if  $\pi^\ast E$ is tangent to $\Ab(\Gamma)$ at $\Ab(p)$.
\end{definition}

Note that it is also called an \textit{elliptic tangential cover}. Notice that $\pi^\ast E$ is a 1-dimensional abelian sub-variety of $\Jac(\Gamma)$. What this means is that $\mathbb{Q}^2\cong\pi^\ast\HH^1(E,\mathbb{Q})\subset\HH^1(\Jac(\Gamma),\mathbb{Q})$ becomes a 1-dimensional complex vector space of $\HH^{1,0}(\Jac(\Gamma))$ after identifying $\HH^{1}(\Jac(\Gamma),\mathbb{R})$ with $\HH^{1,0}(\Jac(\Gamma))$ canonically. From this, it is not hard to see that $\Gamma$ is an elliptic soliton if $\frac{\partial}{\partial\tau}\Ab(\Gamma)(p)$ is a rational vector in $\HH^{1,0}(\Gamma)$.

\subsection{Deformation of elliptic solitons}

Let $\mathscr{E}_g$ be a moduli space of elliptic solitons of genus $g$. The following theorem is well-known:

\begin{theorem}\label{thel}\cite{trei89}
$$\dim_{\mathbb{C}}\mathscr{E}_g=g.$$
Moreover, if we fix an elliptic curve, a reduced moduli space $\widetilde{\mathscr{E}_g}$ for a fixed elliptic curve is an affine variety of dimension $g-1$.
\end{theorem}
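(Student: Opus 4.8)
The plan is to compute the dimension of $\mathscr{E}_g$ by identifying the tangent space to the moduli of elliptic solitons and counting its dimension, then specializing to the fixed-elliptic-curve case. First I would recall the geometric characterization: an elliptic soliton is an elliptic tangential cover $\pi:(\Gamma,p)\to(E,q)$, so that $\pi^\ast E\subset\Jac(\Gamma)$ is a one-dimensional abelian subvariety tangent to $\Ab(\Gamma)$ at $\Ab(p)$. Equivalently, by the remark preceding the statement, $\frac{\partial}{\partial\tau}\Ab(\Gamma)(p)=\mathbf{U}$ is a rational vector in $\HH^{1,0}(\Gamma)$. Thus the data defining a point of $\mathscr{E}_g$ consist of the curve $\Gamma$, the marked point $p$, and the tangency condition pinning down $\mathbf{U}$ up to the lattice data of $E$.

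The key step is the dimension count via a parameter tally. I would parametrize as follows: the choice of elliptic curve $E$ contributes one modulus ($j$-invariant); the degree $n$ of the cover $\pi$ is a discrete datum; the branch data and the position of the cover in the Hurwitz-type family account for the remaining parameters, while the tangency requirement that $\pi^\ast E$ osculate $\Ab(\Gamma)$ is one geometric condition that the arithmetic genus formula and the Riemann-Hurwitz relation constrain. The cleanest route is to invoke the description of $\mathscr{E}_g$ as the closure of the locus swept out by tangential covers, and to show that the tangent space at a generic $[\Gamma]\in\mathscr{E}_g$ has dimension exactly $g$. Here I would use the variational formula~\eqref{ell1} for $\frac{\partial}{\partial\tau}\Ab$: deforming $\Gamma$ by a Schiffer variation changes $\mathbf{U}$, and the rationality (tangency) condition carves out a subspace whose codimension in $\TT_{[\Gamma]}\mathscr{M}_g$ can be read off, giving $\dim\mathscr{E}_g=g$.

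For the reduced moduli space $\widetilde{\mathscr{E}_g}$, I would fix $E$ (removing its one modulus) and then quotient by the translation action of $E$ on itself, which acts freely on the tangential-cover data by translating the base point $q$; this removes one further dimension, yielding $g-1$. The affineness would follow by exhibiting $\widetilde{\mathscr{E}_g}$ as a fiber of a dominant morphism to the $j$-line sitting inside an affine Hurwitz scheme of covers of $E$ with prescribed ramification, or more directly by citing the construction of Treibich~\cite{trei89} in which $\widetilde{\mathscr{E}_g}$ appears as a closed subvariety of an affine space of Calogero-Moser data.

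The hard part will be making the dimension count rigorous rather than heuristic: one must show both that the tangency condition is exactly one independent condition per unit of genus (so that no unexpected coincidences drop the dimension) and that the family is unobstructed, so the tangent-space dimension equals the actual dimension of $\mathscr{E}_g$. I expect this obstruction-freeness, together with the affineness claim, to require the full force of Treibich's deformation-theoretic analysis rather than a short self-contained argument, so in practice I would defer the detailed verification to~\cite{trei89} and present the formula as cited.
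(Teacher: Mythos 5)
The paper itself offers no proof of this statement: it is imported verbatim from Treibich \cite{trei89} (``The following theorem is well-known''), so there is no internal argument to compare against, and your closing decision to defer the real work to \cite{trei89} is in effect exactly what the paper does. To that extent the proposal is acceptable as a citation, not as a proof.

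However, the heuristic scaffolding you wrap around the citation contains two concrete errors that you should delete or repair. First, your dimension bookkeeping is inconsistent: if $\dim_{\mathbb{C}}\mathscr{E}_g=g$ and fixing the elliptic curve $E$ removes its one modulus, you already land on $g-1$; quotienting \emph{in addition} by the translation action of $E$ would leave $g-2$, contradicting the very statement you are proving. The count in the theorem is a single reduction: $\mathscr{E}_g$ fibers over the one-dimensional moduli of elliptic curves with $(g-1)$-dimensional fibers $\widetilde{\mathscr{E}_g}$. Second, the plan to use the variational formula~\eqref{ell1} to ``read off'' the codimension of the rationality condition on $\mathbf{U}=\frac{\partial}{\partial\tau}\Ab(\Gamma)(p)$ cannot work: rationality of a vector is not an analytic or algebraic condition with a well-defined codimension (the rational vectors form a countable dense subset of $\HH^{1,0}(\Gamma)$), so no tangent-space count can be extracted from it. The reformulation that makes a dimension count possible is the existence of the tangential cover $\pi:(\Gamma,p)\to(E,q)$ itself, which is what Treibich analyzes via tangential polynomials. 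Worse, the paper uses formula~\eqref{ell1} for the \emph{opposite} purpose: in Section~\ref{ellsch} it shows that Schiffer (rank-one) directions never preserve the soliton condition, culminating in $\delta(\TT_{[\Gamma]}\widetilde{\mathscr{E}_g})\bigcap\mathscr{C}_g=\emptyset$; that formula detects directions transverse to $\mathscr{E}_g$, and can say nothing about the dimension of $\mathscr{E}_g$ along itself.
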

From the deformation equation
\begin{equation}
\begin{aligned}
&\frac{\partial}{\partial\tau}\Ab(\Gamma_{\epsilon}^{\ast})_j\\
&=g_j(p)-\sum_{k=1}^{g}\sum_{n=1}^{\infty}\frac{\epsilon^n}{n!(n-1)!}\sum_{s+m=n-1}\begin{pmatrix}n-1\\m\end{pmatrix}(\frac{d^{n-1+m}}{dt^{n-1+m}}f^{\ast}_{j}(p_0))\frac{d^{s}}{dt^{s}}f_k(p_0)h_k(p),
\end{aligned}
\end{equation}
in order to have that $\frac{\partial}{\partial\tau}\Ab(\Gamma_{\epsilon}^{\ast})(p)$ is a rational vector for all $\epsilon$, we must have

\begin{equation}
\begin{aligned}
0=&\sum_{k=1}^{g}\sum_{n=1}^{\infty}\frac{\epsilon^n}{n!(n-1)!}\sum_{s+m=n-1}\begin{pmatrix}n-1\\m\end{pmatrix}(\frac{d^{n-1+m}}{dt^{n-1+m}}f^{\ast}_{j}(p_0))\frac{d^{s}}{dt^{s}}f_k(p_0)h_k(p)\\
&=\sum_{n=1}^{\infty}\frac{\epsilon^n}{n!(n-1)!}\sum_{s+m=n-1}\begin{pmatrix}n-1\\m\end{pmatrix}(\frac{d^{n-1+m}}{dt^{n-1+m}}f^{\ast}_{j}(p_0))\sum_{k=1}^{g}\frac{d^{s}}{dt^{s}}f_k(p_0)h_k(p).
\end{aligned}
\end{equation}
Hence,   it is an elliptic soliton if
\begin{equation}\label{el2}
\sum_{k=1}^{g}\frac{d^i}{dt^i}f_k(p_0)h_k(p)=0\text{ for }i=0,\dots,\infty.
\end{equation}
A similar argument in Section~\ref{hysch} shows that there is no nonzero vector $(c_1,\dots,c_g)$ such that
\begin{equation}
\sum_{k=1}^{g}c_k\frac{d^i}{dt^i}f_k(p_0)=0\text{ for }i=0,\dots,\infty.
\end{equation}
The other possibility of Equation~\eqref{el2} satisfied is that there exists a basis
$\{\eta_1,\dots,\eta_g\}$ such that $h_k(p)=0$ for $k=1,\dots g$. Consequently, if so, we see that any deformation of elliptic solitons under Schiffer variation at $p_0\ne p$ gives an elliptic soliton. But this contradicts to Theorem~\ref{th1} and Theorem~\ref{thel} unless $g=1$. Hence we prove

\begin{theorem}
A Schiffer variation can not preserve the deformation of elliptic solitons  unless $g=1$.
\end{theorem}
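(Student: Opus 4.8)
The plan is to run the same three-step dichotomy that was used for the hyperelliptic theorem, but now applied to the \emph{first} derivative of the Abel--Jacobi map and the characterization of elliptic solitons by rationality of $\frac{\partial}{\partial\tau}\Ab(\Gamma)(p)$. Fix $p\neq p_0$ and recall from the deformation equation displayed just above that $\frac{\partial}{\partial\tau}\Ab(\Gamma_{\epsilon}^{\ast})(p)$ stays a rational vector for every $\epsilon$ precisely when the correction series vanishes identically in $\epsilon$, i.e. exactly when Equation~\eqref{el2} holds for all $i=0,1,2,\dots$. Thus the whole theorem reduces to showing that Equation~\eqref{el2} cannot hold once $g\geq2$, while the case $g=1$ is handled separately.

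First I would dispose of the two ways in which \eqref{el2} could be satisfied, exactly as in Section~\ref{hysch}. The coefficient vectors $\big(\tfrac{d^i}{dt^i}f_1(p_0),\dots,\tfrac{d^i}{dt^i}f_g(p_0)\big)$, $i\geq0$, span $\mathbb{C}^g$: if some nonzero $(c_1,\dots,c_g)$ annihilated all of them, then $\sum_k c_k\omega_k$ would vanish to infinite order at $p_0$ and hence vanish identically, contradicting that $\{\omega_1,\dots,\omega_g\}$ is a basis. Consequently \eqref{el2} forces the orthogonal vector $(h_1(p),\dots,h_g(p))$ to be zero, so the only surviving possibility is the existence of a dual basis $\{\eta_1,\dots,\eta_g\}$ with $h_k(p)=0$ for all $k$. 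The task then becomes ruling out this second possibility for $g\geq2$.

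The heart of the argument is that this second possibility is geometrically impossible. Since $h_k(p)$ does not depend on the variation point $p_0$, the vanishing $h_k(p)=0$ would make \eqref{el2} hold for \emph{every} choice of $p_0$, so the elliptic soliton property at $p$ would be preserved under a Schiffer variation at every point of $\Gamma$. By the Remark following Theorem~\ref{th1}, for $g\geq5$ these Schiffer directions, as $p_0$ ranges over $\Gamma$, span the whole tangent space $\TT_{[\Gamma]}\mathscr{M}_g$, of dimension $3g-3$. Preservation in all these directions would force $\TT_{[\Gamma]}\mathscr{E}_g=\TT_{[\Gamma]}\mathscr{M}_g$, whence $\dim_{\mathbb{C}}\mathscr{E}_g=3g-3$; but Theorem~\ref{thel} gives $\dim_{\mathbb{C}}\mathscr{E}_g=g$, and $g<3g-3$ for all $g\geq2$. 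This contradiction eliminates the second possibility, so \eqref{el2} fails and the soliton property is not preserved. The genus one case is genuinely exceptional: there the count $3g-3$ does not govern the dimension, every genus one curve tautologically covers itself, so every such curve is an elliptic soliton and the property is vacuously preserved.

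The hard part will be the passage from ``preserved in every Schiffer direction'' to ``$\TT_{[\Gamma]}\mathscr{E}_g$ is all of $\TT_{[\Gamma]}\mathscr{M}_g$,'' because Theorem~\ref{th1} and the spanning statement in the Remark are available only for a general curve of genus $\geq5$. For $g=2,3,4$ one cannot simply assert that the bi-canonical image fills $\mathbb{P}\HH^1(\Gamma,\Theta)$, so the clean dimension contradiction must either be supplemented by a direct low-genus check or the conclusion read as holding for general curves of genus $\geq5$ together with the separate trivial cases. Making the argument uniform across all $g\geq2$ is the delicate point; by contrast the linear-independence step and the $p_0$-independence of $h_k(p)$ are routine.
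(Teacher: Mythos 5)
Your proposal is correct and follows essentially the same route as the paper's own proof: reduce preservation of rationality of $\frac{\partial}{\partial\tau}\Ab(\Gamma_\epsilon^\ast)(p)$ to Equation~\eqref{el2}, eliminate the first alternative by linear independence of the jets of $(f_1,\dots,f_g)$ at $p_0$, and contradict the surviving alternative $h_k(p)=0$ by noting it would force preservation under Schiffer variation at every $p_0$, which is incompatible with $\dim_{\mathbb{C}}\mathscr{E}_g=g<3g-3$ (Theorem~\ref{thel} against the spanning of Schiffer directions). The low-genus difficulty you flag is not a real obstruction, and the paper glosses over it in exactly the same way: all that is needed is that Schiffer variations span $\TT_{[\Gamma]}\mathscr{M}_g$, i.e.\ non-degeneracy of the bi-canonical image, which holds for every curve of genus $\geq2$ and does not require the generality or $g\geq5$ hypotheses under which Theorem~\ref{th1} identifies the full rank-one locus.
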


\begin{corollary}
Let $\mathscr{C}_g$ be the space of rank 1 transformations. Then
$$\delta(\TT_{[\Gamma]}\widetilde{\mathscr{E}_g})\bigcap\mathscr{C}_g=\emptyset\text{ for }g\geq5.$$
\end{corollary}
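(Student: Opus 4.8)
The plan is to reproduce, for the reduced elliptic soliton locus, the argument that yields Corollary~\ref{co1} in the hyperelliptic case: combine the theorem immediately preceding this corollary with Theorem~\ref{th1} and the remark following it. The guiding principle is that for $g\geq5$ a transformation in $\mathscr{C}_g$ and a Schiffer variation are the same datum, so a rank one transformation tangent to $\widetilde{\mathscr{E}_g}$ would be a Schiffer variation that infinitesimally preserves the elliptic soliton property, which has just been shown to be impossible.

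First I would argue by contradiction. Suppose $\delta(\TT_{[\Gamma]}\widetilde{\mathscr{E}_g})\cap\mathscr{C}_g\neq\emptyset$ for some $g\geq5$. Then there exists $\xi\in\TT_{[\Gamma]}\widetilde{\mathscr{E}_g}\subset\HH^1(\Gamma,\Theta)$ with $\rank\delta(\xi)=1$; in particular $\delta(\xi)\neq0$, so $\xi\neq0$ and the projective class $[\xi]$ is well defined, and since $\rank\delta(\xi)\leq1$ it belongs to the rank one degeneracy locus $\Sigma$. By Theorem~\ref{th1}, for $g\geq5$ we have $\Sigma=\phi_{2\K_\Gamma}(\Gamma)$, whence $[\xi]\in\phi_{2\K_\Gamma}(\Gamma)$. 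The remark following Theorem~\ref{th1} then identifies $\xi$, up to a nonzero scalar, with a Schiffer variation direction.

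Next I would extract the contradiction from the theorem preceding this corollary. Since $\xi$ is tangent to $\widetilde{\mathscr{E}_g}$ at $[\Gamma]$, the associated Schiffer variation keeps $[\Gamma]$ inside the elliptic soliton locus to first order; equivalently, the rationality of $\frac{\partial}{\partial\tau}\Ab(\Gamma)(p)$ in $\HH^{1,0}(\Gamma)$ that characterizes an elliptic soliton is preserved infinitesimally under this Schiffer variation. But the preceding theorem shows that a Schiffer variation cannot preserve the elliptic soliton property once $g>1$: the obstruction is precisely the failure of the vanishing condition~\eqref{el2}. Since $g\geq5>1$, this is a contradiction, and hence the intersection is empty.

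The main obstacle, as in Corollary~\ref{co1}, is to make rigorous the passage from the finite statement of the theorem (``for all $\epsilon$'') to the infinitesimal statement needed here (``no Schiffer direction is tangent to $\widetilde{\mathscr{E}_g}$''). I would handle this by observing that the theorem is proved through the formal identity~\eqref{el2}, which must hold to every order in the power series in $\epsilon$; the failure of its first nontrivial order already prevents a Schiffer direction from lying in $\TT_{[\Gamma]}\widetilde{\mathscr{E}_g}$, so the obstruction is genuinely first order and the implication is valid. A secondary point to check is the base point: the remark produces a Schiffer variation at some $p_0\in\Gamma$, and one must confirm, as in the hyperelliptic case, that the argument applies whether or not $p_0$ coincides with the marked point $p$ of the soliton. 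Finally, the restriction $g\geq5$ enters only through Theorem~\ref{th1}, exactly as in Corollary~\ref{co1}, while the dimension count of Theorem~\ref{thel} is what underlies the preceding theorem.
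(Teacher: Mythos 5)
Your proposal follows exactly the paper's (implicit) argument: the paper states this corollary without any separate proof, treating it as an immediate consequence of the preceding theorem (Schiffer variations do not preserve elliptic solitons for $g>1$) combined with Theorem~\ref{th1} and the Remark identifying every rank one transformation with a Schiffer variation when $g\geq5$. Your reconstruction --- contradiction via a rank-one tangent vector, identification of that vector with a Schiffer direction through the bi-canonical image, and appeal to the non-preservation theorem --- is precisely this argument, and your added attention to the finite-versus-infinitesimal passage and to the location of the base point $p_0$ addresses subtleties that the paper itself leaves unrecorded.
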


\end{document}